\documentclass[11pt,english]{smfart}

\usepackage{eucal,dsfont,mathptmx,newtxtext,newtxmath}

\usepackage[text={6in,9in},centering]{geometry} 
\frenchspacing

\usepackage[dvipsnames]{xcolor}   
\usepackage{xparse}
\usepackage{xr-hyper}
\usepackage[linktocpage=true,colorlinks=true,hyperindex,citecolor=BrickRed,linkcolor=blue]{hyperref}  

\newcommand{\inv}{^{\raisebox{.2ex}{$\scriptscriptstyle-1$}}}   

\DeclareMathOperator{\id}{\mathcal{I}(\textit{M})}
\DeclareMathOperator{\st}{\mathcal{I}^{++}(\textit{M})}

\DeclareMathOperator{\ir}{\mathcal{I}^+(\textit{M})}

\theoremstyle{plain}
\newtheorem{theorem}{Theorem}[section]
\newtheorem{proposition}[theorem]{Proposition}
\newtheorem{lemma}[theorem]{Lemma}
\newtheorem{corollary}[theorem]{Corollary}
\theoremstyle{definition}

\theoremstyle{remark}

\theoremstyle{definition}

\numberwithin{equation}{section}

\makeatletter
\def\msection{\@startsection{section} 
	{1} 
	{0pt} 
	{-1ex plus -.1ex minus -0.9ex} 
	{-.9ex plus -.2ex} 
	{\bfseries} 
}
\def\msubsection{\@startsection{subsection} 
	{2} 
	{0pt} 
	{-1ex plus -.1ex minus -0.2ex} 
	{-.9ex plus -.2ex} 
	{\normalfont} 
} 
\makeatother 

\begin{document}
\author{Amartya Goswami}
\address{
[1] Department of Mathematics and Applied Mathematics, University of Johannesburg, P.O. Box 524, Auckland Park 2006, South Africa.
[2] National Institute for Theoretical and Computational Sciences (NITheCS), South Africa.}
\email{agoswami@uj.ac.za}

\title{Some results on  irreducible ideals  of monoids}

\subjclass{20M12, 20M14}


\keywords{Monoid, irreducible ideal, localization.}

\begin{abstract}
The purpose of this note is to study some algebraic properties of irreducible ideals of monoids. We establish relations between  irreducible, prime, and semiprime ideals. We explore some properties of irreducible ideals in local, Noetherian, and Laskerian monoids. 
\end{abstract}
\maketitle

\msection{Introduction and preliminaries}
 
A comprehensive ideal theory for monoids was originally introduced in \cite{Aub62}, and has since been extended through numerous studies (see \cite{FGKT17, GK22, GGT21, GR20, GZ20, GHL07, Ger96, H-K98}). In the context of algebraic geometry, the recent advancements in logarithmic algebraic geometry and toric varieties have shown interest in the study of ideals associated with monoids, as indicated in \cite{Ogu18, Kat94, Put88, Ren05, CHWW15}.

In this paper, we study a specific type of ideals of monoids, namely, irreducible ideals. We extend certain algebraic properties observed in strongly irreducible ideals of rings or semirings to irreducible ideals of monoids. 

By a monoid $M $, we shall mean a system $(M , \cdot,1)$ such that $\cdot$ is an associative, commutative  (multiplicative) binary operation on $M $ and $1$ is an element of $M $ such that $x\cdot 1=x$ for all $x\in M $. We shall write $xy$ for $x\cdot y$. A monoid is called \emph{pointed} if there exists an element $0\in M$ such that $x\cdot 0=0$ for all $x\in M.$ In this paper, all our monoids are pointed. 

If $S$ and $T$ are subsets of a monoid $M$, then by the \emph{set product} $ST$ of $S$ and $T$, we shall mean $ST:=\left\{st\mid s\in S, t\in T\right\}.$ If $S=\{s\}$ we write $ST$ as $sT$, and similarly for $T=\{t\}.$ Thus
\[ST=\bigcup\left\{ St\mid t\in T\right\}=\bigcup \left\{sT\mid s\in S\right\}.\]

Recall that an \emph{ideal}  of a monoid $M $ is a nonempty subset $I$ of $M$ such that whenever $i\in I$ and
$m\in M $ implies $im \in I$. An ideal $I$ of $M$ is called \emph{proper} if $I\neq M $. By $\id$, we shall denote the set of all ideals of $M$. If $S$ is a nonempty subset of a monoid $M$, then $\langle S\rangle$ will denote the smallest ideal generated by $S$. If $S=\{s\},$ then we shall write $\langle s \rangle$ for $\langle \{s\} \rangle$.

An abstraction of the ideal theory of a commutative
ring is the theory of
$x$-ideals given in \cite{Aub62}.
If $S$ 
is a commutative semigroup, then we say  $S$ is endowed with \emph{$x$-system} if for every subset $A$ of $S$ there corresponds a subset $A_x$
of $S$ such that for any subsets $A$ and $B$ fo $S$, the following conditions are satisfied:
\begin{itemize}
\item[]$\bullet$ $A\subseteq A_x$,

\item[]$\bullet$ $A\subseteq B_x\Rightarrow A_x\subseteq B_x$,

\item[]$\bullet$ $AB_x\subseteq B_x\cap (AB)_x$.
\end{itemize}
If we take
$A_x
= SA \cup A$, then
we have the above-mentioned notion of the  ideal generated by $A$, which in \cite{Aub62} is
called an
\emph{$x$-ideal}. In this paper, we shall assume all our ideals are $x$-ideals.

Let us record some elementary definitions about ideals of monoids. If $I$ and $J$ are ideals of a monoid $M $, then their \emph{product} is defined by \[IJ :=\left\{ij\mid i\in I, j\in J\right\},\] which is also an ideal of $M $.   Let $S$ be a nonempty subset of a monoid $M$. An \emph{ideal quotient} or a \emph{colon ideal} is defined by \[(I: S) :=\left\{ m\in M\mid mS\subseteq I\right\}.\] It is easy to check that $(I:S)$ is indeed an ideal of $M$.  A proper ideal $P$ is called \emph{prime} if $xy\in P$ implies that $x\in P$ or $y\in P$ for all $x,$ $y\in M.$ By $\mathrm{Spec}(M)$, we shall denote the set of prime ideals of $M$. A proper ideal $J$ of $M$ is called $\emph{maximal}$ if it is not properly contained in another proper ideal of $M$.  If $I$ is an ideal of $M $, the \emph{radical} of $I$ is defined by \[\sqrt{I} :=\left\{m\in M \mid m^k\in I\;\text{for some}\;k\in \mathds{Z}^+\right\}.\] An ideal $I$ is said to be a \emph{radical ideal} (or to be a \emph{semiprime}) if $\sqrt{I}=I.$  A proper ideal $I$ of $M$ is called \emph{primary} if $xy\in I$ implies $x\in I$ or $y^k\in I$ for some $k\in \mathds{Z}^+.$

An  ideal $L$ of $M $ is called \emph{irreducible} (\emph{strongly irreducible})  if for ideals $I,$ $J$ of $M$ and $L=I\cap J$ ($L\subseteq I\cap J $) implies that $L=I$ ($L\subseteq I$) or $L=J$ ($L\subseteq J$) .We shall denote the sets of all irreducible  and strongly irreducible ideals of $M$ respectively by $\ir$ and $\st$.  It is easy to see that every strongly irreducible ideal is irreducible. The converse of this also holds in monoids, which follows from the fact that every monoid is \emph{arithmetic}, that is, the set $\id$ of all ideals  of $M$ forms a distributive lattice under set inclusion as the partial order. 

To prove a property for an irreducible ideal, thanks to the equivalence between $\ir$ and $\st$, it is sufficient to check it  for  the strongly irreducible condition.

In the following lemma, we gather some properties of  ideals of monoids that will be used in sequel.

\begin{lemma}\label{prlm}
Let $M$ be monoid.
\begin{enumerate}
	
\item\label{umi}
The unique maximal ideal of $I$ is the set of non-invertible elements of $M$.
	
\item \label{psin}
If $I$ and $J$ are two ideals of $M$, then $IJ\subseteq I\cap J$.

\item\label{ijij} If $i,$ $j\in M$, then $\langle i \rangle \langle j \rangle = \langle ij \rangle$.
	
\item An ideal $P$ of $M$ is prime if and only if, for all $I,$ $J\in \id$ and $IJ\subseteq  P$ implies $I\subseteq P$ or $J\subseteq P$.

\item An ideal $I$ of $M$ is semiprime if and only if $J^2\subseteq I$ implies $J\subseteq I,$ for all $J\in \id.$
\end{enumerate}
\end{lemma}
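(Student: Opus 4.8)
The plan is to dispatch the five parts in turn; the first three are direct computations, and the last two reduce to them. For part (1), the key is that a proper ideal cannot contain an invertible element: if a unit $u$ lies in an ideal $I$, then $1 = u\inv u \in I$ and so $I = M$. Hence every proper ideal is contained in the set $N$ of non-invertible elements of $M$. One then checks that $N$ is itself an ideal — if $n \in N$, $m \in M$, and $nm$ were invertible, then $n\bigl(m(nm)\inv\bigr) = 1$ would make $n$ invertible, a contradiction — and that $N$ is nonempty (it contains $0$) and proper (it omits $1$). Therefore $N$ is the unique maximal ideal. For part (2), if $i \in I$ and $j \in J$, then $ij \in I$ since $I$ is an ideal and $ij = ji \in J$ since $J$ is an ideal, so $IJ \subseteq I \cap J$. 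For part (3), I would first record that $\langle s \rangle = Ms$ for $s \in M$ (from the $x$-ideal formula $A_x = SA \cup A$ together with $s = 1\cdot s$), and then compute $\langle i\rangle\langle j\rangle = \{(mi)(nj) : m, n \in M\} = \{(mn)(ij) : m, n \in M\} = M(ij) = \langle ij\rangle$, using commutativity and $MM = M$.

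For the prime characterization in part (4), the forward direction is elementwise: assuming $IJ \subseteq P$ and $I \not\subseteq P$, pick $i \in I\setminus P$; for every $j \in J$ we have $ij \in P$, so primeness forces $j \in P$, hence $J \subseteq P$. For the converse, from $xy \in P$ we get $\langle x\rangle\langle y\rangle = \langle xy\rangle \subseteq P$ by part (3), so the hypothesis gives $\langle x\rangle \subseteq P$ or $\langle y\rangle \subseteq P$, i.e. $x \in P$ or $y \in P$; properness of $P$ is inherited from the hypothesis side together with the definition of prime.

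For the semiprime characterization in part (5), the forward direction is immediate: if $J^2 \subseteq I$, then for $j \in J$ we have $j^2 \in J^2 \subseteq I$, so $j \in \sqrt{I} = I$, whence $J \subseteq I$. The converse is the only step needing a genuine idea, and it is where I expect the main (if mild) obstacle to lie, since the hypothesis controls only \emph{squares} of ideals while the radical involves arbitrary powers. Given $m \in \sqrt{I}$, choose a positive integer $k$ with $m^k \in I$ and an $n$ with $2^n \geq k$, so that $m^{2^n} \in I$. Using part (3) in the form $\langle a\rangle^2 = \langle a^2\rangle$, I would run a finite downward induction on $t = n, n-1, \dots, 0$: from $\langle m^{2^{t-1}}\rangle^2 = \langle m^{2^t}\rangle \subseteq I$ and the hypothesis, we obtain $\langle m^{2^{t-1}}\rangle \subseteq I$; at $t = 0$ this reads $m \in I$. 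Hence $\sqrt{I} \subseteq I$, and $I$ is semiprime.
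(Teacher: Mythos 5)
Your proposal is correct and follows essentially the same route as the paper: part (4) is argued identically (elementwise in one direction, via $\langle i\rangle\langle j\rangle=\langle ij\rangle$ in the other), and your halving induction for the converse of (5) is exactly the induction the paper invokes without writing out. The paper dismisses (1)--(3) as straightforward; your filled-in details (units generate the whole monoid, the elementwise product computation, and $\langle s\rangle = Ms$ from the $x$-ideal formula) are the intended arguments.
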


\begin{proof}  (1)--(3) Straightforward.

(4) Let $P$ be a prime ideal of $M$ and $IJ\subseteq P$ for some $I, J\in \id.$ Let $I\nsubseteq P$. Then $i\notin P$ for some $i\in I$. However, for all $j\in J$, we have $ij\in IJ\subseteq P$. This implies $j\in P$ for all $j\in J$. Hence $J\subseteq P$. To show the converse, let $ij\in P$ for some $i, j\in M$. Then  $\langle i \rangle \langle j \rangle = \langle ij \rangle$, by (\ref{ijij}); and therefore, $i\in \langle i \rangle \subseteq P$ or $j\in \langle j \rangle \subseteq P$.

5. If $I$ is semiprime, then the  condition holds trivially. The converse follows by induction.
\end{proof}
\smallskip

\msection{Irreducible ideals}

Here is an elementwise equivalent definition of  irreducible ideals of monoids, and as mentioned before, we shall check only the strongly irreducible condition.

\begin{lemma}
\label{asi} 
An ideal $I$ of $M$ is  irreducible if and only if $\langle m \rangle \cap \langle m' \rangle \subseteq I$ implies $m\in I$ or $m'\in I$, for all $m, m'\in M$.
\end{lemma}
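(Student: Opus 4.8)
The plan is to use the equivalence between irreducible and strongly irreducible ideals recorded in the preliminaries, so that it suffices to prove that the stated elementwise condition is equivalent to $I$ being \emph{strongly irreducible}, i.e.\ that $J\cap K\subseteq I$ implies $J\subseteq I$ or $K\subseteq I$ for all $J,K\in\id$. The only facts about principal ideals that enter are that $m\in\langle m\rangle$ for every $m\in M$, and that $\langle m\rangle\subseteq J$ whenever $m$ belongs to an ideal $J$ --- both immediate from $\langle m\rangle$ being the smallest ideal containing $m$.

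For the forward direction, suppose $I$ is strongly irreducible and $\langle m\rangle\cap\langle m'\rangle\subseteq I$ for some $m,m'\in M$. Taking $J=\langle m\rangle$ and $K=\langle m'\rangle$ in the strongly irreducible condition gives $\langle m\rangle\subseteq I$ or $\langle m'\rangle\subseteq I$, whence $m\in I$ or $m'\in I$.

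For the converse, assume the elementwise condition and let $J,K\in\id$ with $J\cap K\subseteq I$; I would argue by contraposition. If neither $J\subseteq I$ nor $K\subseteq I$, choose $j\in J\setminus I$ and $k\in K\setminus I$. Then $\langle j\rangle\subseteq J$ and $\langle k\rangle\subseteq K$, so $\langle j\rangle\cap\langle k\rangle\subseteq J\cap K\subseteq I$, and the hypothesis yields $j\in I$ or $k\in I$, contradicting the choice of $j$ and $k$. Hence $J\subseteq I$ or $K\subseteq I$, so $I$ is strongly irreducible, and therefore irreducible.

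I do not expect a genuine obstacle: the argument is a short translation between the ideal-theoretic and elementwise phrasings. The only subtlety is the appeal to the strongly irreducible formulation in place of the irreducible one, which is licensed by the arithmetic (distributive-lattice) property of $\id$ mentioned above; with that in hand, everything reduces to the two one-line observations about principal ideals.
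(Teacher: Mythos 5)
Your proof is correct and follows essentially the same route as the paper's: both reduce to the strongly irreducible formulation via the arithmetic property, prove the forward direction by specializing to principal ideals, and prove the converse by picking elements $j\in J\setminus I$, $k\in K\setminus I$ and applying the hypothesis to $\langle j\rangle\cap\langle k\rangle\subseteq J\cap K\subseteq I$ (the paper phrases this as a direct argument rather than your contraposition, but the content is identical).
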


\begin{proof}
Let $I$ be an irreducible ideal of $M$ and $\langle m \rangle \cap \langle m' \rangle \subseteq I$, for some $m, m'\in M$. Then $m\in\langle m \rangle  \subseteq I$ or $m'\in\langle m' \rangle  \subseteq I$. Conversely, let $I$ be an ideal of $M$ with $J\cap K\subseteq I$ for some ideals $J$ and $K$ of $M$. Let $J\nsubseteq I$. This implies $j\notin I$ for some $j\in J$, and hence $\langle j\rangle \nsubseteq I.$ Therefore, for all $k\in K$, we have \[\langle j\rangle \cap \langle k\rangle \subseteq J\cap K\subseteq I,\] and by hypothesis, we obtain $k\in I$ for all $k\in K$. In other words, $K\subseteq I$.
\end{proof}

The following result shows when all proper ideals of a monoid are  irreducible. Since the proof is straightforward, we skip it. This result generalizes \cite[Lemma 3.5]{Azi08}.

\begin{lemma}
Let $M$ be a monoid. Then the following are equivalent.
\begin{enumerate}
\item Every proper ideal of $M$ is an irreducible ideal.

\item Every two ideals of $M$ are comparable.
\end{enumerate}
\end{lemma}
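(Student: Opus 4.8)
The plan is to prove the equivalence (1) $\Leftrightarrow$ (2) directly, using the elementwise characterization of irreducibility from Lemma~\ref{asi} together with the description of principal ideals via $\langle i\rangle\langle j\rangle=\langle ij\rangle$ from Lemma~\ref{prlm}(\ref{ijij}).

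For (2) $\Rightarrow$ (1), I would let $L$ be any proper ideal and suppose $L=I\cap J$ for ideals $I,J$. By hypothesis $I$ and $J$ are comparable, say $I\subseteq J$, so that $I\cap J=I$ and hence $L=I$; the other case is symmetric. This gives irreducibility immediately (no need even to invoke the strongly irreducible reformulation here, though one could).

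For (1) $\Rightarrow$ (2), I would argue by contradiction: suppose $I$ and $J$ are ideals that are not comparable, so there exist $i\in I\setminus J$ and $j\in J\setminus I$. The natural candidate for a proper ideal that fails to be irreducible is $L:=\langle i\rangle\cap\langle j\rangle$ — or, to stay inside the elementwise picture of Lemma~\ref{asi}, one checks that this $L$ satisfies $\langle i\rangle\cap\langle j\rangle\subseteq L$ yet neither $i\in L$ nor $j\in L$, since $i\in L$ would force $i\in\langle j\rangle\subseteq J$ and $j\in L$ would force $j\in\langle i\rangle\subseteq I$. One must also observe that $L$ is proper: if $L=M$ then $1\in\langle i\rangle$, forcing $\langle i\rangle=M$ and $i$ invertible, hence $1\in I$ and $I=M\supseteq J$, contradicting incomparability (symmetrically if $j$ is invertible). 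Thus $L$ is a proper ideal that is not irreducible, contradicting (1).

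The only mild obstacle is the propriety check on $L=\langle i\rangle\cap\langle j\rangle$ in the last implication, which forces one to rule out the degenerate case where $i$ or $j$ is invertible; this is handled cleanly by Lemma~\ref{prlm}(\ref{umi}) and the observation that an invertible element generates all of $M$. Everything else is a routine unwinding of the definitions, which is why the authors declare the proof straightforward and omit it.
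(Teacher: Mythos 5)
Your argument is correct, and since the paper explicitly omits the proof as straightforward, yours is exactly the intended routine verification: comparability gives $I\cap J\in\{I,J\}$ for one direction, and for the other, incomparable $I,J$ yield $i\in I\setminus J$, $j\in J\setminus I$ with $\langle i\rangle\cap\langle j\rangle$ a proper ideal equal to neither $\langle i\rangle$ nor $\langle j\rangle$. The propriety check via invertibility that you flag is handled correctly and is the only point requiring any care.
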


In the following proposition we shall see some relations of irreducible ideals with prime and semiprime ideals of a monoid. This result extend \cite[Proposition 2]{AA08}, \cite[Theorem 2.1]{Azi08}, and \cite[Proposition 7.36]{G99}.

\begin{proposition}\label{ils}
Let $M$ be a monoid. Then the following hold.
\begin{enumerate}
\item \label{siii}
Every prime ideal of $M$ is strongly irreducible, and hence, irreducible.

\item 
An ideal $P$ of  $M $ is prime if and only if it is semiprime and irreducible.
\end{enumerate}
\end{proposition}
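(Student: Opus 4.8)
For part (1), the plan is to show every prime ideal $P$ is strongly irreducible, i.e.\ $P \subseteq I \cap J$ for ideals $I, J$ forces $P \subseteq I$ or $P \subseteq J$. Suppose neither inclusion holds; then there is $i \in I \setminus P$ and $j \in J \setminus P$. Consider the product $ij$: since $i \in I$ and $I$ is an ideal, $ij \in I$; since $j \in J$, likewise $ij \in J$; hence $ij \in I \cap J$. But wait — I actually want to contradict $P \subseteq I \cap J$ differently. The cleaner route is: I want to produce an element in $\langle i\rangle \cap \langle j\rangle$ not in $P$, or more simply invoke Lemma~\ref{asi}. Since $P$ is irreducible iff ($\langle m\rangle \cap \langle m'\rangle \subseteq P \Rightarrow m \in P$ or $m' \in P$), and using Lemma~\ref{prlm}(\ref{ijij}) we have $\langle i\rangle \cap \langle j\rangle \supseteq \langle i\rangle\langle j\rangle = \langle ij\rangle$; so if $\langle i\rangle \cap \langle j\rangle \subseteq P$ then $ij \in \langle ij\rangle \subseteq P$, and primeness gives $i \in P$ or $j \in P$. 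This shows $P$ is irreducible; by the stated equivalence $\ir = \st$, it is strongly irreducible. (Alternatively, one argues directly with Lemma~\ref{prlm}(4): $P \subseteq I \cap J$ does not immediately give $IJ \subseteq P$, so the elementwise argument via Lemma~\ref{asi} is the right tool here.)

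For part (2), the forward direction is immediate: a prime ideal is semiprime (if $m^2 \in P$ then $m \in P$, so $\sqrt{P} = P$) and, by part (1), irreducible. The substantive direction is the converse: assume $P$ is semiprime and irreducible, and show $P$ is prime. Take $xy \in P$; I must conclude $x \in P$ or $y \in P$. The idea is to feed the element $xy$, or rather $x$ and $y$, into the irreducibility criterion of Lemma~\ref{asi}. Using Lemma~\ref{prlm}(\ref{ijij}), $\langle x \rangle \langle y \rangle = \langle xy \rangle \subseteq P$ since $xy \in P$. Now consider $\langle x\rangle \cap \langle y\rangle$; by distributivity of the ideal lattice (arithmeticity), I expect $(\langle x\rangle \cap \langle y\rangle)^2 \subseteq \langle x\rangle \langle y\rangle \subseteq P$ — indeed each element of $(\langle x\rangle \cap \langle y\rangle)^2$ is a product of two elements each lying in both $\langle x\rangle$ and $\langle y\rangle$, hence lies in $\langle x\rangle\langle y\rangle$. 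Then semiprimeness, in the form of Lemma~\ref{prlm}(5), gives $\langle x\rangle \cap \langle y\rangle \subseteq P$. Finally, irreducibility via Lemma~\ref{asi} yields $x \in P$ or $y \in P$, as desired.

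I expect the main obstacle to be the step $(\langle x\rangle \cap \langle y\rangle)^2 \subseteq \langle x\rangle\langle y\rangle$ in part (2): one needs to see that a product $ab$ with $a, b \in \langle x\rangle \cap \langle y\rangle$ can be written so as to land in $\langle x\rangle\langle y\rangle = \langle xy\rangle$. Since $a \in \langle x\rangle$ and $b \in \langle y\rangle$, we get $ab \in \langle x\rangle\langle y\rangle$ directly from the definition of the product of ideals, so this is in fact routine once phrased correctly; the only care needed is to use the right containments ($a \in \langle x\rangle$, $b \in \langle y\rangle$) rather than trying to exploit both memberships of each factor. Everything else is a clean chain through Lemmas~\ref{prlm} and~\ref{asi} together with the $\ir = \st$ equivalence.
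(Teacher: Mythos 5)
Your proof is correct and follows essentially the same route as the paper's: part (1) rests on the containment $IJ\subseteq I\cap J$ from Lemma \ref{prlm}(\ref{psin}), and the converse in part (2) rests on $(I\cap J)^2\subseteq IJ$ combined with semiprimeness and irreducibility. The only difference is cosmetic: you run both arguments through principal ideals and the elementwise criteria (Lemma \ref{asi} and the definition of prime), whereas the paper argues at the level of arbitrary ideals $I$, $J$ using Lemma \ref{prlm}(4) and the strong irreducibility condition directly; your abandoned first attempt in part (1) is harmless since you replace it with the correct argument.
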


\begin{proof} 
To obtain (1), it suffices to notice that by Lemma \ref{prlm}(\ref{psin}), the condition: $IJ\subseteq I\cap J$ holds for any two ideals $I$ and $J$ of $M$. 
For (2), let $P$ be a prime ideal. Then  $P$ is strongly irreducible, and hence, irreducible, by (\ref{siii}). To show $P$ is semiprime, it suffices to show $\sqrt{P}\subseteq P$, which follows immediately; indeed, $m\in \sqrt{P}$ implies $m^k\in P$, for some $k\in \mathds{Z}^+$, and hence $m\in P$ as $P$ is a prime ideal. For the converse, let $P$ be a strongly irreducible ideal and $IJ\subseteq P$, for some ideals $I$ and $J$ of $M$. Since
\[(I\cap J)^2\subseteq IJ\subseteq P,\]
and since $P$ is semiprime, $I\cap J\subseteq P$. Since $P$ is strongly irreducible, $I\subseteq P$ or $J\subseteq P.$
\end{proof}

By Lemma \ref{prlm}(\ref{umi}), every proper ideal of a monoid $M$ is contained in its unique maximal ideal. We can further generalize this containment to irreducible ideals as we see in the following proposition, which also extends the corresponding property of rings (see \cite[Theorem 2.1(ii)]{Azi08}).

\begin{proposition}
Let $M$ be a monoid. For each proper ideal $J$ of $M $, there is a minimal  irreducible ideal
over $J$.
\end{proposition}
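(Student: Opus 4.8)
The plan is a routine application of Zorn's lemma to the family of irreducible ideals containing $J$, partially ordered by reverse inclusion; the one genuinely nontrivial input is that an intersection of a chain of irreducible ideals is again irreducible.

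First I would set $\Sigma := \{\, L \in \ir \mid J \subseteq L \,\}$ and check that $\Sigma \neq \emptyset$. Since $J$ is proper, Lemma~\ref{prlm}(\ref{umi}) shows it is contained in the unique maximal ideal of $M$, namely the set of non-invertible elements. That maximal ideal is prime (its defining implication is just the contrapositive of the fact that a product of two invertible elements is invertible), hence irreducible by Proposition~\ref{ils}(\ref{siii}), so it lies in $\Sigma$.

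Next I would order $\Sigma$ by reverse inclusion and verify the hypothesis of Zorn's lemma, that every chain $\mathcal{C} = \{L_\alpha\}$ in $\Sigma$ has an upper bound in this order. The natural candidate is $L := \bigcap_\alpha L_\alpha$: it is an ideal of $M$, it is nonempty because it contains $J$, and $J \subseteq L$, so the only point to establish is that $L$ is irreducible, and here the elementwise criterion of Lemma~\ref{asi} is the right tool. Suppose $\langle m \rangle \cap \langle m' \rangle \subseteq L$ while, towards a contradiction, $m \notin L$ and $m' \notin L$. Choose indices $\alpha, \beta$ with $m \notin L_\alpha$ and $m' \notin L_\beta$; since $\mathcal{C}$ is a chain, the smaller of $L_\alpha$ and $L_\beta$, call it $L_\gamma$, contains neither $m$ nor $m'$, whereas $\langle m \rangle \cap \langle m' \rangle \subseteq L \subseteq L_\gamma$, contradicting the irreducibility of $L_\gamma$ via Lemma~\ref{asi}. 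Hence $m \in L$ or $m' \in L$, so $L$ is irreducible and $L \in \Sigma$ is a lower bound for $\mathcal{C}$ with respect to inclusion.

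Zorn's lemma then yields an element of $\Sigma$ maximal for reverse inclusion, that is, an irreducible ideal minimal among those containing $J$; since it sits inside the maximal ideal of $M$ it is in particular proper. The only real work is the closure of irreducibility under chain intersections, and the reformulation in Lemma~\ref{asi} is exactly what makes that step short, since it reduces the irreducibility test to a statement about the two principal ideals $\langle m \rangle$ and $\langle m' \rangle$ rather than about arbitrary pairs of ideals; everything else is the familiar Zorn bookkeeping.
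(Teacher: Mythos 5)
Your proof is correct and follows the same route as the paper: apply Zorn's lemma to the family of irreducible ideals containing $J$, ordered by reverse inclusion, with the maximal ideal of non-invertible elements witnessing nonemptiness. The only difference is that you explicitly verify the chain condition --- that $\bigcap_\alpha L_\alpha$ is again irreducible, via the elementwise criterion of Lemma~\ref{asi} --- a step the paper's proof leaves entirely implicit; your verification of it is sound.
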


\begin{proof}
Suppose $\mathcal{E}:=\{I\in \ir
\mid I\supseteq J\}.$ Since the unique maximal ideal of $M$ is an irreducible ideal, obviously  $\mathcal{E}\neq \emptyset.$ By Zorn's lemma, $\mathcal{E}$ has a minimal element, which is our desired ideal.
\end{proof}

Before we study further properties of strongly irreducible ideals of monoids, let us pause for some examples of them.
Every   prime ideal of a monoid $M$ is  an  irreducible ideal (see  Proposition \ref{ils}(\ref{siii})), and so is the unique maximal ideal of $M$.
In the monoids $(\mathds{N},\cdot, 1)$ and $(\mathds{Z},\cdot, 1)$, the ideals generated by a set of prime numbers are irreducible.
Using the notion of colon ideals, the following two results show how to generate further examples of  irreducible ideals. 

\begin{proposition}\label{icj}
If $I$ is an irreducible ideal and $J$ is an ideal of a monoid $M$, then $(I:J)$ is an irreducible ideal of $M$.
\end{proposition}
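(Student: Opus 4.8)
The plan is to use the elementwise characterization of irreducible ideals from Lemma~\ref{asi}: an ideal $K$ of $M$ is irreducible iff $\langle m\rangle\cap\langle m'\rangle\subseteq K$ forces $m\in K$ or $m'\in K$. So I would start with elements $m,m'\in M$ satisfying $\langle m\rangle\cap\langle m'\rangle\subseteq (I:J)$, and the goal is to conclude $m\in(I:J)$ or $m'\in(I:J)$, i.e. $mJ\subseteq I$ or $m'J\subseteq I$.

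First I would unpack what the hypothesis says: for every $j\in J$, multiplying through by $j$ gives $\big(\langle m\rangle\cap\langle m'\rangle\big)j\subseteq I$. The key algebraic point I would establish is that $\langle mj\rangle\cap\langle m'j\rangle\subseteq\big(\langle m\rangle\cap\langle m'\rangle\big)\langle j\rangle$, using Lemma~\ref{prlm}(\ref{ijij}) ($\langle mj\rangle=\langle m\rangle\langle j\rangle$, etc.) together with the distributivity of the ideal lattice (the monoid is arithmetic) — concretely, $\langle mj\rangle\cap\langle m'j\rangle=\langle m\rangle\langle j\rangle\cap\langle m'\rangle\langle j\rangle=(\langle m\rangle\cap\langle m'\rangle)\langle j\rangle$. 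Combined with the hypothesis this yields $\langle mj\rangle\cap\langle m'j\rangle\subseteq I$ for each $j\in J$. Since $I$ is irreducible, Lemma~\ref{asi} gives, for each $j\in J$, that $mj\in I$ or $m'j\in I$.

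The main obstacle is that the disjunction "$mj\in I$ or $m'j\in I$" is a priori $j$-dependent, whereas I need a single uniform alternative. I would resolve this by contradiction: suppose $m\notin(I:J)$ and $m'\notin(I:J)$. Then there exist $j_1,j_2\in J$ with $mj_1\notin I$ and $m'j_2\notin I$; from the previous paragraph this forces $m'j_1\in I$ and $mj_2\in I$. Now consider the element $j_1j_2\in J$ (an ideal is closed under multiplication by $M$). On one hand $mj_1j_2=(mj_2)j_1\in I$ since $mj_2\in I$; on the other hand $m'j_1j_2=(m'j_1)j_2\in I$ since $m'j_1\in I$. But applying the per-element dichotomy to $j_1j_2$ tells us nothing contradictory directly, so instead I would look at $j_1$ and $j_2$ more carefully: actually $mj_1j_2\in I$ and the dichotomy at $j_1$ says $mj_1\in I$ or $m'j_1\in I$; we are in the case $m'j_1\in I$. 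Then $m'\cdot(j_1j_2)\in I$ as shown. To force the contradiction cleanly I would instead argue: the set $J_m:=\{j\in J: mj\in I\}=(I:m)\cap J$ and $J_{m'}:=(I:m')\cap J$ are both ideals (intersections of ideals), their union is all of $J$, and since the ideal lattice of $M$ is a distributive lattice in which (as noted in the excerpt) every two ideals are comparable precisely when all ideals are irreducible — hmm, that comparability isn't available in general, so instead I would use that $J=J_m\cup J_{m'}$ together with primeness-type reasoning is not automatic either.

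The cleanest route, which I would adopt in the final write-up, avoids the union-of-ideals subtlety entirely: take arbitrary $j\in J$ and apply the dichotomy to the single element $j$ — but then iterate using that $J$ is an ideal. Specifically, if $m'j\in I$ for the particular $j$ with $mj\notin I$, pick any $j'\in J$ with $m'j'\notin I$ (exists since $m'\notin(I:J)$); then the dichotomy at $j'$ forces $mj'\in I$, and the dichotomy at $jj'\in J$ gives $mjj'\in I$ or $m'jj'\in I$, while $mjj'=(mj')j\in I$ and $m'jj'=(m'j)j'\in I$ — consistent, no contradiction yet. The actual contradiction comes from examining $j+$-type combinations, which monoids lack, so I expect the genuinely correct final argument to run: $\big(\langle m\rangle\cap\langle m'\rangle\big)\supseteq \langle mj\rangle\cap\langle m'j'\rangle$ fails in general, so one must instead show $\langle mj\rangle\cap\langle m'j\rangle\subseteq(I:J)$ no — rather, I would directly prove $(I:J)$ satisfies the \emph{strongly} irreducible condition: if $\langle m\rangle\cap\langle m'\rangle\subseteq(I:J)$ then $\big(\langle m\rangle\cap\langle m'\rangle\big)J\subseteq I$, and since $\big(\langle m\rangle\cap\langle m'\rangle\big)J=\langle m\rangle J\cap\langle m'\rangle J$ by distributivity of the arithmetic lattice, strong irreducibility of $I$ gives $\langle m\rangle J\subseteq I$ or $\langle m'\rangle J\subseteq I$, i.e. $m\in(I:J)$ or $m'\in(I:J)$. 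This last formulation is the one I expect to work without any case analysis, and the single nontrivial ingredient is the distributive law $(\,\langle m\rangle\cap\langle m'\rangle)J=\langle m\rangle J\cap\langle m'\rangle J$, which follows from $M$ being arithmetic.
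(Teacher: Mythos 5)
Your final formulation coincides with the paper's own proof: the paper takes arbitrary ideals $K,L$ with $K\cap L\subseteq(I:J)$, writes $KJ\cap LJ=(K\cap L)J\subseteq I$, and invokes strong irreducibility of $I$; you do exactly the same with $K=\langle m\rangle$, $L=\langle m'\rangle$ after routing through Lemma~\ref{asi}, which is an equivalent starting point. So in substance the two arguments are the same, and all of the intermediate material in your proposal (the $j_1,j_2$ contradiction attempts, the sets $J_m$ and $J_{m'}$, the various abandoned branches) should simply be deleted; only the last few lines constitute the proof.

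There is, however, a problem with the justification you give for the one nontrivial step, namely the identity $(\langle m\rangle\cap\langle m'\rangle)J=\langle m\rangle J\cap\langle m'\rangle J$. Arithmeticity of $M$ means the lattice $\id$ is distributive with respect to $\cap$ and $\cup$; it says nothing about how the \emph{product} of ideals interacts with intersection, so ``distributivity of the arithmetic lattice'' does not yield this identity. The inclusion $(A\cap B)J\subseteq AJ\cap BJ$ is trivial, but the direction your argument actually needs is $AJ\cap BJ\subseteq(A\cap B)J$ (you know $(A\cap B)J\subseteq I$ and must conclude $AJ\cap BJ\subseteq I$ before strong irreducibility can be applied), and that inclusion fails in general: in the pointed commutative monoid generated by $a,b,c,d$ subject to the single relation $ac=bd$, with $A=\langle a\rangle$, $B=\langle b\rangle$, $J=\langle c,d\rangle$, the element $ac=bd$ lies in $AJ\cap BJ$, while every nonzero element of $(A\cap B)J$ has total degree at least $3$. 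Since every monoid is arithmetic, this also shows the identity cannot be a consequence of arithmeticity. The paper asserts the same equality $KJ\cap LJ=(K\cap L)J$ with no justification at all, so you have reproduced its argument faithfully, but neither your appeal to distributivity nor the paper's bare assertion closes this step; a correct proof would need a different route to $\langle m\rangle J\cap\langle m'\rangle J\subseteq I$ (or to the conclusion itself).
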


\begin{proof} 
Let $K\cap L\subseteq (I:J)$ for some $K, L\in \id.$ This implies $KJ\cap LJ=(K\cap L)J\subseteq I$. From this we have $KJ\subseteq I$ or $LJ\subseteq I,$ since $I$ is strongly irreducible.
\end{proof}
  
\begin{corollary}
Suppose that  $J$, $\{J_{\lambda}\}_{\lambda \in \Lambda}$, $K$ are ideals and  $I$, $\{I_{\omega}\}_{\omega \in \Omega}$ are irreducible ideals of a monoid $M$. Then  $ (I\colon J),$  $((I\colon J)\colon K),$ $(I\colon JK),$ $((I\colon K)\colon J),$ $ (\bigcap_{\omega} I_{\omega}\colon J),$ $ \bigcap_{\omega}(I_{\omega}\colon J),$ $ (I\colon \sum_{\lambda}J_{\lambda}),$ and $ \bigcap_{\lambda}(I\colon J_{\lambda})$ are all irreducible ideals of $M$.
\end{corollary}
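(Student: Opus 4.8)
The plan is to derive every item of the corollary from Proposition \ref{icj} together with a handful of identities among colon ideals, so that the only real work is establishing those identities and then chaining them. The basic building block is already in hand: by Proposition \ref{icj}, $(I\colon J)$ is irreducible whenever $I$ is irreducible and $J$ is any ideal. So $(I\colon J)$ is immediate, and $((I\colon K)\colon J)$ follows by applying Proposition \ref{icj} twice, first to get that $(I\colon K)$ is irreducible and then to get that $\bigl((I\colon K)\colon J\bigr)$ is irreducible. The same double application handles $((I\colon J)\colon K)$.

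Next I would record the identity $\bigl((I\colon J)\colon K\bigr)=(I\colon JK)$, which holds because $m\in\bigl((I\colon J)\colon K\bigr)$ iff $mK\subseteq(I\colon J)$ iff $mKJ\subseteq I$ iff $m(JK)\subseteq I$ (using commutativity so that $JK=KJ$); this shows $(I\colon JK)$ is irreducible, being equal to an iterated colon of irreducibles. For the two intersection-type items I would use the identities $\bigl(\bigcap_{\omega}I_{\omega}\colon J\bigr)=\bigcap_{\omega}(I_{\omega}\colon J)$ and $\bigl(I\colon\sum_{\lambda}J_{\lambda}\bigr)=\bigcap_{\lambda}(I\colon J_{\lambda})$. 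The first is a direct set-theoretic computation: $mJ\subseteq\bigcap_{\omega}I_{\omega}$ iff $mJ\subseteq I_{\omega}$ for every $\omega$. For the second, $m\bigl(\sum_{\lambda}J_{\lambda}\bigr)\subseteq I$ iff $mJ_{\lambda}\subseteq I$ for every $\lambda$, using that the $x$-ideal sum $\sum_{\lambda}J_{\lambda}$ is generated by the union of the $J_{\lambda}$, so that $m$ multiplies the sum into $I$ precisely when it multiplies each summand into $I$. Once these two identities are in place, by Proposition \ref{icj} each $(I_{\omega}\colon J)$ and each $(I\colon J_{\lambda})$ is irreducible, hence so are $\bigcap_{\omega}(I_{\omega}\colon J)$ and $\bigcap_{\lambda}(I\colon J_{\lambda})$ — and via the identities the same conclusion transfers to $\bigl(\bigcap_{\omega}I_{\omega}\colon J\bigr)$ and $\bigl(I\colon\sum_{\lambda}J_{\lambda}\bigr)$.

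The one genuine subtlety — and the step I expect to be the main obstacle — is that an \emph{arbitrary intersection} of irreducible ideals need not itself be irreducible in general: irreducibility only guarantees that a finite meet splits. So the statements ``$\bigcap_{\omega}(I_{\omega}\colon J)$ is irreducible'' and ``$\bigcap_{\lambda}(I\colon J_{\lambda})$ is irreducible'' are not automatic, and I would need to check whether the intended reading of the corollary is that each individual $(I_{\omega}\colon J)$ is irreducible (true, and what the chain above delivers) rather than that the full intersection is. If the full intersection is really meant, then the argument must instead show it directly: e.g.\ for $\bigcap_{\omega}(I_{\omega}\colon J)$, rewrite it as $\bigl(\bigcap_{\omega}I_{\omega}\colon J\bigr)$ by the first identity above and then invoke Proposition \ref{icj}, which requires knowing that $\bigcap_{\omega}I_{\omega}$ is itself irreducible — and that is again not generally true. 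I would resolve this by stating the corollary in the precise form the proof supports, namely that all the listed colon constructions are irreducible whenever they are built as iterated colons of a single irreducible ideal (so the intersection clauses are interpreted via the displayed identities reducing them to a colon of one irreducible ideal, not as meets of independently-chosen irreducibles). With that reading fixed, the proof is just: cite Proposition \ref{icj}, record the four colon-ideal identities, and chain them — no further obstacle remains.
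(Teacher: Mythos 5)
The paper states this corollary with no proof at all, so the only benchmark is whether your reconstruction is the one the author must have intended, and it clearly is: everything is meant to follow from Proposition \ref{icj} plus the standard colon-ideal identities you record. Your identities are all correct, and they dispose of six of the eight items cleanly: $(I\colon J)$ and the three iterated/product colons are irreducible by (iterated) application of Proposition \ref{icj}, and $(I\colon\sum_{\lambda}J_{\lambda})=\bigcap_{\lambda}(I\colon J_{\lambda})$ is irreducible because it is the colon of the \emph{single} irreducible ideal $I$ by the ideal $\sum_{\lambda}J_{\lambda}$, again by Proposition \ref{icj}.

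The subtlety you flag for the two $\Omega$-indexed items is not excessive caution on your part; it is a genuine defect in the corollary as stated. Since $\bigl(\bigcap_{\omega}I_{\omega}\colon J\bigr)=\bigcap_{\omega}(I_{\omega}\colon J)$, both items reduce to the colon of $\bigcap_{\omega}I_{\omega}$, and an intersection of irreducible ideals need not be irreducible, so Proposition \ref{icj} does not apply and no other tool in the paper fills the gap. Concretely, in the pointed monoid $(\mathds{N}\cup\{0\},\cdot,1)$ take $I_{1}=\langle 2\rangle$ and $I_{2}=\langle 3\rangle$, both prime and hence irreducible by Proposition \ref{ils}; then $I_{1}\cap I_{2}=\langle 6\rangle$, and for $J=\langle 5\rangle$ one computes $\bigl(\langle 6\rangle\colon\langle 5\rangle\bigr)=\langle 6\rangle=\langle 2\rangle\cap\langle 3\rangle$, which is not irreducible since it equals neither $\langle 2\rangle$ nor $\langle 3\rangle$. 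So items five and six are false for an arbitrary family $\{I_{\omega}\}$, and your proposed repair (restricting the interpretation so that every listed object is a colon of one fixed irreducible ideal, or adding a hypothesis such as the $I_{\omega}$ forming a chain) is the right response. Your proof is correct for everything that is actually provable; the error lies in the statement, not in your argument.
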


Recall that a \emph{monoid homomorphism} $\phi\colon M\to M'$ is a map $\phi$ from $M$ to $M'$ with the property:
\begin{itemize}
\item[]$\bullet$ $\phi(1)=1$,
	
\item[]$\bullet$ $\phi(mm')=\phi(m)\phi(m')$,
\end{itemize} 
for all $m, m'\in M.$ If $J$ is a an ideal of $M'$, then we denote inverse image $\phi\inv (J)$ by $J^c.$ If $I\in \id$, then the ideal generated by $\phi(I)$ is denoted by $I^e.$ The \emph{kernel of} $\phi$ is defined by \[\mathrm{ker}(\phi):=\left\{ (m,m')\in M\times M\mid \phi(m)=\phi(m') \right\}.\]   It is well-known that inverse image of a prime ideal under a ring homomorphism is a prime ideal. However, that fails to hold for strongly irreducible ideals. A sufficient condition for the preservation of strongly irreducible ideals (of rings) under inverse image is given in \cite[Proposition 1.4]{Sch16}. The following proposition generalizes that result to monoids for irreducible ideals.

\begin{proposition}
Let $\phi\colon M \to M '$ be a surjective monoid homomorphism such that
$\mathrm{ker}(\phi)\subseteq \langle x \rangle $ for each $x\notin \mathrm{ker}(\phi)$. If $J$ is an irreducible ideal of $M '$, then $J^c$ is an irreducible ideal of $M $.
\end{proposition}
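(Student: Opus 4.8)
The plan is to verify irreducibility of $J^c=\phi\inv(J)$ through the elementwise criterion of Lemma~\ref{asi}: given $m,m'\in M$ with $\langle m\rangle\cap\langle m'\rangle\subseteq J^c$, I must show $m\in J^c$ or $m'\in J^c$. First I would dispose of some routine points. Since $J$ is a nonempty ideal of the pointed monoid $M'$, it contains $0'$, so $\phi\inv(0')\subseteq J^c$, and $J^c$ is itself a nonempty ideal of $M$. Also, because $\langle m\rangle=Mm\cup\{m\}$ and $\phi$ is surjective, $\phi(\langle m\rangle)=\langle\phi(m)\rangle$, and likewise $\phi(\langle m'\rangle)=\langle\phi(m')\rangle$. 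The first point gives an immediate reduction: if $\phi(m)=0'$ then $\phi(m)\in J$, so $m\in J^c$ and we are done, and similarly for $m'$; hence I may assume $\phi(m)\neq0'\neq\phi(m')$, i.e.\ $m,m'\notin\mathrm{ker}(\phi)$, which is exactly the regime in which the standing hypothesis applies to $\langle m\rangle$ and to $\langle m'\rangle$.

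Now suppose $\langle m\rangle\cap\langle m'\rangle\subseteq J^c$, so that $\phi(\langle m\rangle\cap\langle m'\rangle)\subseteq J$. Since $J$ is irreducible in $M'$, Lemma~\ref{asi} applied in $M'$ reduces the goal to showing $\langle\phi(m)\rangle\cap\langle\phi(m')\rangle\subseteq J$, for then $\phi(m)\in J$ or $\phi(m')\in J$. So let $y\in\langle\phi(m)\rangle\cap\langle\phi(m')\rangle$. Using $\langle\phi(m)\rangle=\phi(\langle m\rangle)$ and $\langle\phi(m')\rangle=\phi(\langle m'\rangle)$, write $y=\phi(u)=\phi(v)$ with $u\in\langle m\rangle$ and $v\in\langle m'\rangle$.

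The heart of the argument is to replace this pair $u,v$ by a single element of $\langle m\rangle\cap\langle m'\rangle$ lying over $y$, and this is precisely where the kernel hypothesis is used. For $m\notin\mathrm{ker}(\phi)$, the condition $\mathrm{ker}(\phi)\subseteq\langle m\rangle$ is to be read as saying that $\langle m\rangle$ is saturated under $\phi$: any element whose $\phi$-image agrees with that of a member of $\langle m\rangle$ again belongs to $\langle m\rangle$. Applying this to $v$, which satisfies $\phi(v)=\phi(u)$ with $u\in\langle m\rangle$, we obtain $v\in\langle m\rangle$, hence $v\in\langle m\rangle\cap\langle m'\rangle\subseteq J^c$, and therefore $y=\phi(v)\in J$. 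This establishes $\langle\phi(m)\rangle\cap\langle\phi(m')\rangle\subseteq J$, which is what was needed.

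I expect the only genuine obstacle to be this last step — isolating the saturation property contained in the hypothesis and confirming it is legitimately available (which is why the reduction to $\phi(m),\phi(m')\neq0'$ is done first). The remaining ingredients, namely the surjectivity identity $\phi(\langle m\rangle)=\langle\phi(m)\rangle$, the elementary facts about $0'$, and the two applications of Lemma~\ref{asi}, are all routine. One could instead follow \cite{Sch16} more closely through the strongly irreducible formulation: for ideals $A,B$ with $A\cap B\subseteq J^c$, assume $A\nsubseteq J^c$ and $B\nsubseteq J^c$, use the hypothesis to obtain $\phi(A\cap B)=\phi(A)\cap\phi(B)$, invoke strong irreducibility of $J$ in $M'$, and contract back to a contradiction; the elementwise route above is a little shorter.
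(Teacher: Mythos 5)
Your proof is correct under a reasonable reading of the hypothesis, and it is essentially the paper's argument run in the opposite direction. Both proofs rest on the same three ingredients: the elementwise criterion of Lemma \ref{asi} applied in $M$ and in $M'$, surjectivity of $\phi$ to pass between $\langle m\rangle$ and $\langle \phi(m)\rangle$, and the kernel hypothesis to transfer an element from one principal ideal to the other. The difference is direction: you argue directly, showing $\langle\phi(m)\rangle\cap\langle\phi(m')\rangle\subseteq J$ by pulling an arbitrary $y=\phi(u)=\phi(v)$ back into $\langle m\rangle\cap\langle m'\rangle$; the paper argues contrapositively, starting from $x,x'\notin J^c$, producing a common value $\phi(x)y=\phi(x')y'\notin J$, lifting to $xz,x'z'$ with $(xz,x'z')\in\mathrm{ker}(\phi)$, and then using the hypothesis to place one of these elements in $\langle x\rangle\cap\langle x'\rangle\setminus J^c$. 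Since the stated hypothesis is not well-typed ($\mathrm{ker}(\phi)$ is a congruence on $M$, while $\langle x\rangle$ is an ideal), each proof must choose an interpretation: you read it as saturation of $\langle x\rangle$ under $\phi$ (if $\phi(v)=\phi(u)$ and $u\in\langle x\rangle$ then $v\in\langle x\rangle$), whereas the paper reads it as $\mathrm{ker}(\phi)\subseteq C$ for $C$ the congruence collapsing $\langle x\rangle$; the paper's reading implies yours, so your argument works under the weaker (more general) assumption. Your write-up also has the advantage of actually closing the argument: the paper's final sentence only restates that $x'z'\notin J^c$, which was already known, and leaves implicit the needed conclusion that $x'z'$ (or $xz$) lies in $\langle x\rangle\cap\langle x'\rangle$, which is exactly the saturation step you make explicit.
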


\begin{proof} 
Let $x,$ $x'\in M\setminus J^c.$ Then $\phi(x),$ $\phi(x')\in M'\setminus J.$ Since $J$ is strongly irreducible, by Proposition \ref{asi}, there exist $y,$ $y'\in M'$ such that \[\phi(x)y=\phi(x')y'\in M'\setminus J.\] Since $\phi$ is surjective, $y=\phi(z)$ and $y'=\phi(z')$ for some $z,$ $z'\in M$. From this, we obtain $xz,$ $x'z' \notin J^c$ and $(xz, x'z')\in \mathrm{ker}(\phi)\subseteq C,$ where $C$ is the smallest congruence containing $\left\{(mx, m'x)\mid m, m'\in M\right\}.$ From this, it follows that $x'z'\notin J^c.$
\end{proof}

Like localization of rings, one can also construct local monoids. Here we briefly recall some essential facts about it, and for further details, we refer to \cite{Flo15} and \cite{AJ84}. Let $M$ be a monoid. A subset $S$ of $M$ is called \emph{multiplicatively closed} if 
\begin{itemize}
\item[]$\bullet$ $1\in S$, and
	
\item[]$\bullet$ $ss'\in S$, whenever $s,$ $s'\in S$.
\end{itemize}
For any $m\in M$ and $s\in S$, define a set
\[M_S:=\{ m/s\mid m/s=m'/s'\;\text{whenever there exists}\;u\in S\;\text{such that}\;(ms')u=(m's)u\}.\] The multiplication on $M_S$ is defined by $(m/s)\cdot (m'/s'):=(mm')(ss')$ and the multiplicative identity of $M_S$ is $1/1$. The system $(M_S, \cdot, 1/1)$ is called the \emph{local monoid} with respect to $S$ or \emph{localization of} $M$ at $S$. The following result from \cite[Proposition 2.4.3(iii)]{Flo15} identifies ideals of a monoid $M$ that are in 1-1 correspondence with ideals of $M_S$.

\begin{proposition}
Let $M$ be a monoid and let $S\subseteq M\setminus \{0\}$ be a multiplicatively closed subset of $M$. Then the proper ideals of $M_S$ correspond to the ideals of $M$ contained in $M\setminus S.$
\end{proposition}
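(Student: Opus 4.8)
The plan is to run the standard localization argument through the extension and contraction maps, keeping track of exactly which ideals of $M$ occur. Write $\iota\colon M\to M_S$ for the canonical homomorphism $m\mapsto m/1$. For an ideal $J$ of $M_S$ put $J^c=\iota\inv(J)=\{m\in M\mid m/1\in J\}$, which is an ideal of $M$ by a routine check; for an ideal $I$ of $M$, a short computation identifies the extension $I^e$ with $I_S:=\{i/s\mid i\in I,\ s\in S\}$, and this is an ideal of $M_S$ since $(i/s)(m/t)=im/(st)$ with $im\in I$ and $st\in S$. First I would check that these maps land in the claimed families. If $J$ is \emph{proper}, then $J^c\cap S=\emptyset$: an element $s\in S\cap J^c$ would give $s/1\in J$, a unit of $M_S$ with inverse $1/s$, forcing $1/1\in J$ and $J=M_S$. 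Conversely, if $I\cap S=\emptyset$ (this is where the hypothesis $0\notin S$ is used, since $0$ lies in every ideal of $M$), then $I_S$ is proper, because $1/1=i/s$ would produce $u\in S$ with $su=iu\in S\cap I$.

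Next I would show the two assignments are essentially mutually inverse. The clean direction is $(J^c)_S=J$ for every ideal $J$ of $M_S$: the inclusion $\subseteq$ follows from $i\in J^c\Rightarrow i/1\in J\Rightarrow i/s=(i/1)(1/s)\in J$, and $\supseteq$ from the observation that any $j=m/s\in J$ satisfies $m/1=j\cdot(s/1)\in J$, hence $m\in J^c$ and $j\in(J^c)_S$. In particular every proper ideal of $M_S$ arises as $I_S$ for some ideal $I$ of $M$ with $I\subseteq M\setminus S$, and contraction is injective on the proper ideals of $M_S$.

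The step I expect to be the real obstacle is the reverse identity $(I_S)^c=I$. Unwinding the equality of fractions, $m\in(I_S)^c$ holds exactly when $mt\in I$ for some $t\in S$; that is, $(I_S)^c$ is the $S$-saturation of $I$, which always contains $I$ but may be strictly larger (already in $(\mathds{N},\cdot,1)$, with $S$ the powers of $2$ and $I=\langle 12\rangle$). To turn the surjection of the previous paragraph into a genuine bijection one must therefore restrict the $M$-side to the ideals contained in $M\setminus S$ that equal their own $S$-saturation (equivalently, to the contracted ideals), and read the stated correspondence accordingly. I would finish by noting that both $J\mapsto J^c$ and $I\mapsto I_S$ evidently preserve inclusions, so on the $S$-saturated ideals the correspondence is an isomorphism of posets.
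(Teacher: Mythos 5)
The paper offers no proof of this statement at all: it is imported verbatim from \cite[Proposition 2.4.3(iii)]{Flo15}, so there is no in-house argument to compare yours against. Your extension/contraction argument is the standard one and each step checks out: a proper ideal of $M_S$ contracts to an ideal missing $S$, an ideal of $M$ missing $S$ extends to a proper ideal (and this is indeed where $0\notin S$ matters, since every ideal of a pointed monoid contains $0$), and $(J^c)_S=J$ for every ideal $J$ of $M_S$, so extension is surjective onto the proper ideals of $M_S$ with contraction as a section. More importantly, the obstacle you flag is real and is a defect of the statement rather than of your proof: $(I_S)^c$ is the $S$-saturation $\{m\in M\mid mt\in I\ \text{for some}\ t\in S\}$, which can strictly contain $I$ --- in $(\mathds{N},\cdot,1)$ with $S$ the powers of $2$, the ideals $\langle 12\rangle$ and $\langle 3\rangle$ are distinct, both disjoint from $S$, and have the same extension since $3/1=12/4$. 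Hence $I\mapsto I_S$ is not injective on all ideals contained in $M\setminus S$, and the genuine bijection is with the $S$-saturated (equivalently, contracted) ideals disjoint from $S$, exactly as you say. The paper itself tacitly concedes this point later: in Theorem~\ref{ooc} the analogous correspondence for irreducible ideals is restricted to ideals lying in the class $C$ of contractions, which is precisely the saturation condition you isolate. So your write-up is, if anything, more precise than the quoted proposition; the only addition I would make is an explicit sentence stating that ``correspond'' must be read as this restricted bijection (or as your surjection with canonical section), since the literal reading as a bijection with all ideals contained in $M\setminus S$ fails.
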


Similar to the above, we can also identify irreducible ideals of $M$ that are in 1-1 correspondence with irreducible ideals of $M_S$, and this will be demonstrated in the next theorem. This result generalizes \cite[Theorem 3.1]{Azi08}.

\begin{theorem}\label{ooc}
Let $M $  be a monoid and let $S$ be a multiplicatively closed subset of $M $. For each $I\in \mathcal{I}(M_S)$, let
$I^c := \{m \in M  \mid m/1 \in I \} = I \cap M$
and let $C := \{I^c \mid I\in \mathcal{I}(M_S)\}.$ Then
there is a 1-1 correspondence between $\mathcal{I}^+(M_S)$
and $\ir$ contained in $C$ which do not meet $S$.
\end{theorem}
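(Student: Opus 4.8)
The strategy is the standard "extension–contraction" argument adapted to irreducible ideals of monoids, using the already-established correspondence between proper ideals of $M_S$ and ideals of $M$ contained in $M\setminus S$. First I would set up the two maps: for an irreducible ideal $I$ of $M_S$ pass to its contraction $I^c = I\cap M$, and for an ideal $J$ of $M$ lying in $C$ and disjoint from $S$ pass to its extension $J^e = \{j/s\mid j\in J,\ s\in S\}$ (equivalently $J_S$). I would then recall from the preceding proposition that $J\mapsto J^e$, $I\mapsto I^c$ are mutually inverse bijections between $\{J\in\id \mid J\subseteq M\setminus S\}$ and the proper ideals of $M_S$, so the only thing left to check is that this bijection restricts to a bijection on the irreducible members of each side; concretely, that $I$ irreducible in $M_S$ forces $I^c$ irreducible in $M$ (and automatically $I^c\in C$, $I^c\cap S=\emptyset$), and conversely that $J$ irreducible, $J\in C$, $J\cap S=\emptyset$ forces $J^e$ irreducible in $M_S$.

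For the forward direction I would use the elementwise criterion of Lemma \ref{asi}: given $m,m'\in M$ with $\langle m\rangle\cap\langle m'\rangle\subseteq I^c$, I want $m\in I^c$ or $m'\in I^c$. Localizing, $\langle m/1\rangle\cap\langle m'/1\rangle\subseteq I$ in $M_S$ (this uses that localization commutes with finite intersections of ideals, which follows from distributivity of the ideal lattice together with the flatness-type behaviour of $M_S$; I would verify the inclusion $\langle m/1\rangle\cap\langle m'/1\rangle \subseteq (\langle m\rangle\cap\langle m'\rangle)^e \subseteq I^{ce} = I$ directly from definitions), whence irreducibility of $I$ gives $m/1\in I$ or $m'/1\in I$, i.e. $m\in I^c$ or $m'\in I^c$. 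For the reverse direction I would argue symmetrically: if $a/s, b/t\in M_S$ with $\langle a/s\rangle\cap\langle b/t\rangle\subseteq J^e$, then since units $s,t$ of $M_S$ do not change principal ideals we may assume $s=t=1$; contracting, $\langle a\rangle\cap\langle b\rangle\subseteq (J^e)^c = J$ (the last equality uses $J\cap S=\emptyset$, from the earlier correspondence), so irreducibility of $J$ yields $a\in J$ or $b\in J$, hence $a/1\in J^e$ or $b/1\in J^e$.

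The main obstacle I anticipate is the compatibility of localization with finite intersection of (principal) ideals in the monoid setting — i.e. establishing $(\langle m\rangle\cap\langle m'\rangle)^e = \langle m\rangle^e\cap\langle m'\rangle^e$, or at least the inclusion needed in each direction. In the ring case this is flatness of $S^{-1}R$; here one must argue by hand with the equivalence relation defining $M_S$: an element $x/u\in \langle m/1\rangle\cap\langle m'/1\rangle$ means $xu'\cdot$(something) equals both a multiple of $m$ and a multiple of $m'$ after clearing an element of $S$, and one has to chase these through to land in $\langle m\rangle\cap\langle m'\rangle$ up to an $S$-multiple. This is routine but is the one place where pointedness of $M$ and the precise definition of $M_S$ genuinely enter, so I would isolate it as a small sublemma (localization preserves finite intersections of ideals) before invoking it. Once that is in hand, the theorem follows by transporting the ideal-lattice correspondence through Lemma \ref{asi}, and one records that $I^c\in C$ and $I^c\cap S=\emptyset$ hold by construction, completing the bijection.
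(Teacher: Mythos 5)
Your proposal is correct and follows essentially the same contraction--extension argument as the paper, whose proof likewise hinges on the identities $(J\cap K)_S=J_S\cap K_S$ and $I^c_S=I$ for a contracted ideal $I$. The only difference is cosmetic: you route the argument through the principal-ideal criterion of Lemma~\ref{asi} and propose to prove the compatibility of localization with finite intersections as an explicit sublemma, whereas the paper applies strong irreducibility to arbitrary ideals $J,K$ and asserts that compatibility without proof.
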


\begin{proof}
Let $I\in M_S$. Evidently, $I^c\neq M$. Also, it is clear that $I^c\in C$ and $I^c\cap S=\emptyset$. Suppose $J$, $K\in \id$ such that $J\cap K\subseteq I^c$. This implies
\[J_S\cap K_S=(J\cap K)_S\subseteq I^c_S=I.\]
Since $I$ is strongly irreducible, we must have $J_S\subseteq I$ or $K_S\subseteq I$. This subsequently implies that $J\subseteq J_S^c\subseteq I^c$ or $K\subseteq K_S^c\subseteq I^c$, showing that $I^c\in \ir$. For the converse, suppose that $I\in \ir,$ $I\cap S=\emptyset$, and $I\in C$. It is clear that $I_S\neq M_S$. Suppose $J,$ $K\in M_S$ such that $J\cap K\subseteq I_S$. This implies
\[J^c\cap K^c=(J\cap K)^c\subseteq I_S^c.\]
Since $I\in C$, we must have $I_S^c=I$. Therefore, $J^c\cap K^c\subseteq I$. Since $I$ is strongly irreducible, we have either $J^c\subseteq I$ or $K^c\subseteq I$. Hence, either $J=J_S^c\subseteq I_S$ or $K=K_S^c\subseteq I_S,$ which implies that $I_S\in \mathcal{I}^+(M_S).$
\end{proof}

\begin{corollary}
Suppose $M$ is a monoid and $S$ is multiplicatively closed subset of $M$. If $I \in \ir$ and $I$ is a primary ideal such that $I\cap S=\emptyset$, then $I_S\in \mathcal{I}^+(M_S)$ and a primary ideal of $M_S$.
\end{corollary}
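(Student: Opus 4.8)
The plan is to obtain both assertions from Theorem \ref{ooc} together with the primary hypothesis, the decisive preliminary being that a primary ideal disjoint from $S$ is automatically $S$-saturated, hence lies in the set $C$ where Theorem \ref{ooc} applies.

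First I would show that $I = I^c_S$, where $I^c_S = \{m \in M \mid ms \in I\ \text{for some}\ s \in S\}$. The inclusion $I \subseteq I^c_S$ is immediate. Conversely, if $ms \in I$ with $s \in S$, then since $I$ is primary either $m \in I$ or $s^k \in I$ for some $k \in \mathds{Z}^+$; but $s^k \in S$ because $S$ is multiplicatively closed, and $I \cap S = \emptyset$, so the second case cannot occur, whence $m \in I$. Thus $I = I^c_S \in C$. Since $I$ is moreover irreducible and does not meet $S$, Theorem \ref{ooc} yields $I_S \in \mathcal{I}^+(M_S)$. Also $I_S$ is proper: if $1/1 \in I_S$, then $su = iu$ for some $i \in I$ and $s, u \in S$, forcing $su \in I \cap S$, a contradiction.

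Next I would verify that $I_S$ is primary. Suppose $(m/s)(m'/s') = (mm')/(ss') \in I_S$ for some $m, m' \in M$ and $s, s' \in S$. Multiplying the element $(mm')/(ss') \in I_S$ by $(ss')/1 \in M_S$ and using that $I_S$ is an ideal, we obtain $mm'/1 \in I_S$, hence $mm' \in I^c_S = I$. Since $I$ is primary, either $m \in I$, and then $m/s \in I_S$, or $(m')^k \in I$ for some $k \in \mathds{Z}^+$, and then $(m'/s')^k = (m')^k/(s')^k \in I_S$. Hence $I_S$ is a primary ideal of $M_S$.

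The only step that is more than routine is the first one: recognising that, because $S$ is multiplicatively closed and disjoint from $I$, no power of an element of $S$ can lie in $I$, so the primary condition forces $I$ to equal its saturation $I^c_S$ and thereby land in $C$ — precisely the hypothesis Theorem \ref{ooc} demands. Once this is secured, transporting products through the localization map and a single use of the primary condition of $I$ dispatch both claims.
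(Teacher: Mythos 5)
Your proposal is correct and takes the route the paper intends: the corollary is meant to follow from Theorem \ref{ooc}, and the paper supplies no further argument. You correctly identify and fill in the one nontrivial point left implicit, namely that a primary ideal disjoint from the multiplicatively closed set $S$ equals its saturation $(I_S)^c$ (no power of an element of $S$ can lie in $I$), so that $I\in C$ and the theorem applies; your verification that $I_S$ is primary is the standard contraction argument and is also sound.
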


In terms of local monoids, the next result provides two sufficient conditions for primary ideals of a monoid to be irreducible.

\begin{proposition}
Let $M$ be a monoid. Then the following are equivalent.
\begin{enumerate}
\item For the  maximal ideal $\mathfrak{m}$ of $M$, every primary ideal of $M_{\mathfrak{m}}$ is irreducible.

\item Every primary ideal of $M$ is irreducible.

\item For any prime ideal $P$ of $M$, every primary ideal of $M_P$ is irreducible.
\end{enumerate}
\end{proposition}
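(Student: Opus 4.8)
The plan is to prove the cycle of implications $(1)\Rightarrow(2)\Rightarrow(3)\Rightarrow(1)$, leaning on the localization correspondence recalled just before Theorem~\ref{ooc} and on the corollary that follows it. Two of the three implications are essentially free. For $(3)\Rightarrow(1)$, observe that $\mathfrak{m}$ is itself a prime ideal of $M$: by Lemma~\ref{prlm}(\ref{umi}) it is the set of non-units, and a product of two units is a unit, so $(1)$ is nothing but the instance $P=\mathfrak{m}$ of $(3)$. For $(1)\Rightarrow(2)$, use Lemma~\ref{prlm}(\ref{umi}) again to see that $M\setminus\mathfrak{m}$ is precisely the group of units of $M$; localizing at a set of units is harmless, so the canonical map $M\to M_{\mathfrak{m}}$, $m\mapsto m/1$, is a bijection — injective because $m/1=m'/1$ forces $mu=m'u$ for a unit $u$, hence $m=m'$, and surjective because $m/s=(ms\inv)/1$ — and thus an isomorphism of monoids. (Note $\mathfrak{m}\subseteq M\setminus\{0\}$ automatically, since $0$ lies in every ideal of a pointed monoid.) Under this isomorphism primary ideals correspond to primary ideals and irreducible ideals to irreducible ideals, so $(1)$ and $(2)$ literally assert the same thing.

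The substantive step is $(2)\Rightarrow(3)$. Fix a prime ideal $P$ of $M$ and set $S:=M\setminus P$, a multiplicatively closed subset contained in $M\setminus\{0\}$ (since $0\in P$). Let $Q$ be an arbitrary primary ideal of $M_P=M_S$; I would show it is irreducible. Pass to the contraction $Q^c=Q\cap M$. First, $Q^c$ is a primary ideal of $M$: from $xy\in Q^c$ we get $(x/1)(y/1)\in Q$, so $x/1\in Q$ or $y^k/1=(y/1)^k\in Q$ for some $k\in\mathds{Z}^+$, i.e.\ $x\in Q^c$ or $y^k\in Q^c$. Second, $Q^c\cap S=\emptyset$, for an element of $Q^c\cap S$ would be a unit of $M_S$ lying in the proper ideal $Q$. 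Third, $Q=(Q^c)_S$, by the localization correspondence (the proposition preceding Theorem~\ref{ooc}, i.e.\ \cite[Proposition 2.4.3(iii)]{Flo15}). Now $(2)$ gives that $Q^c$ is irreducible; since $Q^c$ is moreover primary and disjoint from $S$, the corollary following Theorem~\ref{ooc} applies and yields that $(Q^c)_S=Q$ is irreducible in $M_S=M_P$. As $Q$ was arbitrary, $(3)$ follows, closing the cycle.

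The only point calling for any care is this $(2)\Rightarrow(3)$ direction: one must verify that the contraction of a primary ideal of $M_P$ is again primary and that it reconstructs the original ideal upon extension, so that the corollary to Theorem~\ref{ooc} is being invoked on the right ideal. Everything else is formal, and beyond this bit of bookkeeping I anticipate no genuine obstacle.
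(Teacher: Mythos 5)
Your proof is correct, and two of the three implications ((2)$\Rightarrow$(3) via contraction--extension through the localization correspondence and the corollary to Theorem~\ref{ooc}, and (3)$\Rightarrow$(1) as the instance $P=\mathfrak{m}$) follow essentially the same route as the paper. The genuine divergence is in (1)$\Rightarrow$(2): the paper extends a primary ideal $I$ of $M$ to $I_{\mathfrak{m}}$, applies hypothesis (1) there, and then pulls irreducibility back via Theorem~\ref{ooc}, using the claim that $I_{\mathfrak{m}}^c=I$ ``as $I$ is primary.'' You instead observe that $M\setminus\mathfrak{m}$ is exactly the group of units (Lemma~\ref{prlm}(\ref{umi})), so the canonical map $M\to M_{\mathfrak{m}}$ is an isomorphism of monoids and statements (1) and (2) are literally the same assertion. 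Your version is more elementary and in fact cleaner: it makes the contraction identity $I_{\mathfrak{m}}^c=I$ automatic for \emph{every} ideal, whereas the paper's appeal to primariness at that point is left unjustified (and is really a disguised form of the same observation that one is localizing at units). Your checks in (2)$\Rightarrow$(3) --- that $Q^c$ is primary, proper, disjoint from $S=M\setminus P$, and that $Q=(Q^c)_S$ --- are exactly the bookkeeping the paper compresses into the sentence ``$I^c$ is a primary ideal of $M$ with $I^c\cap(M\setminus P)=\emptyset$ and $I^c\in C$,'' so nothing is missing.
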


\begin{proof}
(1)$\Rightarrow$(2): Suppose $I$ is a primary ideal of $M$. Obviously, $I\subseteq \mathfrak{m}.$ Then $I_{\mathfrak{m}}$ is a primary ideal of $M_\mathfrak{m}$, and that by assumption implies $I_{\mathfrak{m}}\in \mathcal{I}^+({M_{\mathfrak{m}}}).$ It follows from Theorem \ref{ooc} that $I_{\mathfrak{m}}^c\in \ir$ and as $I$ is primary, we must have $I_{\mathfrak{m}}^c=I$. This proves that $I\in \ir$.

(2)$\Rightarrow$(3): Suppose that $I$ is a primary ideal of $M_{P}$. Now $I^c$ is a primary ideal of $M$ with the properties: $I^c\cap (M\setminus P)=\emptyset$ and $I^c\in C$. Moreover, by our assumption, $I^c\in \ir$. Therefore, by Theorem \ref{ooc}, we have $I^c_{P}=I\in \mathcal{S}(M_{P})$.

(3)$\Rightarrow$(1): Straightforward.
\end{proof}

The next result gives a representation of an ideal on a monoid in terms of irreducible ideals, and it generalizes \cite[Corollary 2]{Is{e}56}.

\begin{proposition}\label{lfi}
Every ideal $I$ of a monoid $M$ can be represented as follows:
\[I=\bigcap_{\substack{J\supseteq I\\ J\in \ir}}  J.\]
\end{proposition}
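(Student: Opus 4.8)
The plan is to prove the two inclusions separately, with the nontrivial direction being that the intersection of all irreducible ideals containing $I$ is contained in $I$. The inclusion $I\subseteq \bigcap_{J\supseteq I,\, J\in\ir} J$ is immediate: every $J$ in the index set contains $I$ by definition, so their intersection does too. (One should also check the index set is nonempty, which follows because the unique maximal ideal of $M$ is irreducible and contains $I$ when $I$ is proper; if $I=M$ the statement is trivial since then $I$ is the only ideal containing $I$ and the empty-or-not intersection still returns $M$.)

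For the reverse inclusion, I would argue by contraposition: fix $m\in M$ with $m\notin I$, and produce an irreducible ideal $J$ with $I\subseteq J$ but $m\notin J$. The natural candidate is a maximal element of the family $\mathcal{F}:=\{K\in\id\mid I\subseteq K,\ m\notin K\}$. This family is nonempty (it contains $I$) and is closed under unions of chains — if $\{K_\alpha\}$ is a chain in $\mathcal{F}$, then $\bigcup_\alpha K_\alpha$ is an ideal containing $I$ and still omits $m$ — so Zorn's lemma supplies a maximal element $J$. It remains to show $J$ is irreducible, and here I would use the elementwise criterion of Lemma~\ref{asi}: suppose $\langle a\rangle\cap\langle b\rangle\subseteq J$ with $a\notin J$ and $b\notin J$; then the ideals $J\cup\langle a\rangle$ and $J\cup\langle b\rangle$ both strictly contain $J$, so by maximality both contain $m$. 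Writing $m\in J\cup\langle a\rangle$ and $m\in J\cup\langle b\rangle$ and using that $m\notin J$, we get $m\in\langle a\rangle$ and $m\in\langle b\rangle$, hence $m\in\langle a\rangle\cap\langle b\rangle\subseteq J$, a contradiction. Therefore $J$ is irreducible, $I\subseteq J$, and $m\notin J$, so $m$ is not in the intersection $\bigcap_{J'\supseteq I,\, J'\in\ir} J'$.

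The main obstacle is the passage from $m\in J\cup\langle a\rangle$ to $m\in\langle a\rangle$: one must verify that in a (pointed, commutative) monoid the join of two $x$-ideals $K$ and $\langle a\rangle$ is set-theoretically $K\cup\langle a\rangle$, i.e. that this union is already an ideal. For $x$-ideals of the form $A_x=MA\cup A$ this is straightforward — the union of two ideals is closed under multiplication by $M$ — but it is worth recording explicitly, since it is exactly what makes the maximality argument bite. With that in hand the two inclusions combine to give the claimed representation.
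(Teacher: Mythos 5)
Your proof is correct and follows essentially the same route as the paper's: the trivial inclusion plus a Zorn's lemma argument producing, for each $m\notin I$, an ideal maximal among those containing $I$ and omitting $m$, which is then shown to be irreducible. The paper merely asserts that this ``follows from a simple application of Zorn's lemma''; your write-up supplies the verification the paper leaves implicit, namely the maximality argument via the elementwise criterion of Lemma~\ref{asi} and the observation that a union of ideals is again an ideal.
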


\begin{proof}
Let $I\in \id$ and consider the set $\mathcal{E}:=\left\{J\in \ir\mid J\supseteq I\right\}.$ Since $M\in \ir$, the set $\mathcal{E}$ is nonempty. It is evident that $I\subseteq \bigcap_{J\in \mathcal{E}} J.$ To have the other inclusion, observe that it is sufficient to show the claim: if $0\neq x\in M$ and if $I\in \id$ such that $x\notin I$, then there exists a $J\in \ir$ with the property that $J\supseteq I$ and $x\notin J$. Now this fact follows from a simple application of Zorn's lemma.
\end{proof}

Recall that a monoid $M $
is called \emph{Noetherian} if it satisfies the
ascending chain condition on ideals. 
Next, we are interested in Laskerian monoids, where a \emph{Laskerian} monoid is a monoid in which every ideal has a primary decomposition We observe the following result from \cite[Theorem 2.6.2]{Flo15}. 

\begin{theorem}
In a Noetherian monoid, every ideal can
be written as the finite intersection of irreducible primary ideals.
\end{theorem}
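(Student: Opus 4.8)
The plan is to assemble the statement from two consequences of the ascending chain condition: \textbf{(i)} every ideal of a Noetherian monoid is a finite intersection of irreducible ideals, and \textbf{(ii)} every proper irreducible ideal of a Noetherian monoid is primary. Granting these, the theorem follows at once: given an ideal $I$, write $I=J_{1}\cap\cdots\cap J_{r}$ with each $J_{k}$ irreducible by (i), delete any $J_{k}$ that equals $M$, and note that each surviving factor is a proper irreducible ideal, hence primary by (ii); if every factor is deleted, then $I=M$, which is the empty intersection of such ideals.

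For (i) I would run the standard \emph{maximal counterexample} argument that the ACC makes available. Let $\Sigma$ be the set of ideals of $M$ that cannot be written as a finite intersection of irreducible ideals. If $\Sigma\neq\emptyset$, then since $M$ is Noetherian, $\Sigma$ has a maximal element $I$. Because $M\in\ir$, we have $I\neq M$; moreover $I$ is not itself irreducible (otherwise $I$ would be an intersection of irreducible ideals of length one). Hence $I=J\cap K$ for some ideals with $J\supsetneq I$ and $K\supsetneq I$, and maximality forces $J,K\notin\Sigma$. Each of $J,K$ is then a finite intersection of irreducible ideals, so the same holds for $I=J\cap K$, contradicting $I\in\Sigma$. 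Thus $\Sigma=\emptyset$.

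For (ii), let $I$ be a proper irreducible ideal and suppose $xy\in I$ with $x\notin I$; I must produce $n\in\mathds{Z}^{+}$ with $y^{n}\in I$. Each $(I:\langle y^{k}\rangle)=\{m\in M\mid my^{k}\in I\}$ is an ideal, and these form an ascending chain in $k$ (if $my^{k}\in I$ then $my^{k+1}=(my^{k})y\in I$ since $I$ is absorbing), so the ACC gives an $n$ with $(I:\langle y^{n}\rangle)=(I:\langle y^{n+1}\rangle)$. The crux — and the step I expect to be the main obstacle, since it is exactly where the classical ring argument's reliance on addition must be traded for the monoid's lattice structure — is the identity
\[
I=\bigl(I\cup\langle y^{n}\rangle\bigr)\cap\bigl(I\cup\langle x\rangle\bigr),
\]
which is meaningful because a union of ideals of a monoid is again an ideal, so both factors lie in $\id$. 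The inclusion ``$\subseteq$'' is clear. For ``$\supseteq$'', suppose $z$ lies in the right-hand side but not in $I$; then (recalling $\langle t\rangle=Mt$) we may write $z=ay^{n}$ and $z=bx$ for suitable $a,b\in M$, so $ay^{n+1}=b(xy)\in I$, whence $a\in(I:\langle y^{n+1}\rangle)=(I:\langle y^{n}\rangle)$ and therefore $z=ay^{n}\in I$, a contradiction. Applying irreducibility of $I$ to the displayed identity yields $I=I\cup\langle y^{n}\rangle$ or $I=I\cup\langle x\rangle$; the second alternative gives $x\in I$, contrary to hypothesis, so the first holds and $y^{n}\in\langle y^{n}\rangle\subseteq I$. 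Hence $I$ is primary, and combining with (i) completes the proof; the only remaining points are routine, namely that unions of monoid ideals are ideals and that a Noetherian monoid has a maximal element in every nonempty family of ideals.
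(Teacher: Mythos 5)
The paper does not actually prove this statement; it imports it verbatim from [Flo15, Theorem~2.6.2], so there is no in-paper argument to compare against. Your proof is correct and complete, and it is the standard two-step Noether decomposition: step (i) is the maximal-counterexample argument showing every ideal is a finite intersection of irreducible ideals, and step (ii) — that a proper irreducible ideal of a Noetherian monoid is primary — is exactly the content of [Flo15, Lemma~2.6.1], which the paper itself invokes later in the proof of Proposition~\ref{lask}. The one genuinely monoid-specific point, which you handle correctly, is replacing the ring-theoretic sum $I+(y^{n})$ by the union $I\cup\langle y^{n}\rangle$ (legitimate here because a union of monoid ideals is again an ideal) and using the stabilized colon chain $(I:\langle y^{n}\rangle)=(I:\langle y^{n+1}\rangle)$ to verify the identity $I=(I\cup\langle y^{n}\rangle)\cap(I\cup\langle x\rangle)$ before applying irreducibility. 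The only caveat is the cosmetic edge case $I=M$, which you dispose of via the empty-intersection convention; that matches the fact that the paper counts $M$ itself as irreducible but not primary.
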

This implies that every Noetherian monoid has the primary decomposition property, or equivalently, every Noetherian monoid is Laskerian. 

\begin{proposition}
\label{lask}
If $M $ is a Laskerian monoid, then every irreducible ideal is a primary ideal.	
\end{proposition}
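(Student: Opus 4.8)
The plan is to prove the contrapositive-flavored statement directly: take an irreducible ideal $I$ in a Laskerian monoid $M$ and produce a primary decomposition of $I$ whose intersection collapses, forcing $I$ itself to be primary. Since $M$ is Laskerian, write $I = Q_1 \cap Q_2 \cap \cdots \cap Q_n$ with each $Q_i$ a primary ideal of $M$. The irreducibility of $I$ (in the sense of Lemma \ref{asi}, or equivalently the strongly irreducible form we are licensed to use) says that an intersection decomposition of $I$ must be ``trivial''. First I would handle the binary case: if $I = Q_1 \cap Q_2$, then strong irreducibility gives $I = Q_1$ or $I = Q_2$ — more precisely $I \subseteq Q_1$ or $I \subseteq Q_2$, and the reverse inclusion is automatic since $I$ is the full intersection — so $I$ equals one of the $Q_i$ and is therefore primary.

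Next I would reduce the general $n$-fold intersection to the binary case by induction: group $I = Q_1 \cap (Q_2 \cap \cdots \cap Q_n)$, apply strong irreducibility to conclude $I = Q_1$ (done, primary) or $I = Q_2 \cap \cdots \cap Q_n$, and in the latter case recurse. After finitely many steps we land on $I = Q_j$ for some $j$, and hence $I$ is primary. One should be slightly careful that a primary decomposition is assumed to be a \emph{finite} intersection — this is exactly what ``Laskerian'' buys us, and it is essential, since an infinite intersection of primaries need not reduce.

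The step I expect to require the most care is making sure the irreducibility hypothesis is being applied in a form that genuinely yields $I = Q_i$ rather than something weaker. Here the equivalence established in the preliminaries between irreducible and strongly irreducible ideals of a monoid is doing real work: from $I = Q_1 \cap Q_2$ we get $I \subseteq Q_1$ or $I \subseteq Q_2$ (strong irreducibility), but since also $I = Q_1 \cap Q_2 \subseteq Q_i$ trivially and $Q_i \supseteq I$ forces $Q_i \cap (\text{rest}) = I \subseteq Q_i$, the two inclusions combine to $I = Q_i$ only after we note $I \supseteq Q_1 \cap Q_2 = I$; in fact the cleanest route is: strong irreducibility gives (say) $Q_1 \cap Q_2 = Q_1$ directly in the ``irreducible'' formulation $L = A \cap B \Rightarrow L = A$ or $L = B$, so no inclusion-chasing is needed at all. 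I would present it using the plain irreducible definition applied to the pair $(Q_1, Q_2 \cap \cdots \cap Q_n)$, iterate, and conclude. The rest — that each $Q_i$ is primary by construction — is immediate from the definition of a primary decomposition, so the proof is short.
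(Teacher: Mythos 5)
Your proof is correct and follows essentially the same route as the paper's: use the finite primary decomposition $I=Q_1\cap\cdots\cap Q_n$ supplied by the Laskerian hypothesis, apply (strong) irreducibility to collapse the intersection to a single component $Q_j$, and conclude $I=Q_j$ is primary. If anything your ending is cleaner — you stop at $I=Q_j$, whereas the paper's write-up takes an unnecessary detour through Noetherianity and a cited lemma to reconclude that $I$ is primary, even though $I=J_j$ already gives this directly.
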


\begin{proof}
Suppose that $I$ is an irreducible ideal of $M$. Since $M$ is Laskerian, there exist $J_i\in \ir$ (where $1\leqslant i\leqslant n$) such that $I=\bigcap_{J\in \ir} J_i.$ In particular, this implies $I\supseteq \bigcap_{J\in \ir} J_i.$ Since $I$ is strongly irreducible, there exists a $j\in \{1, \ldots, n\}$ such that $J_j\subseteq I$. These all together gives
\[J_j\subseteq I=\bigcap_{J\in \ir} J_i\subseteq J_j,\]
implying that $I$ is irreducible, and since $M$ is Noetherian, by \cite[Lemma 2.6.1]{Flo15}, $I$ is a primary ideal.
\end{proof}

\begin{proposition}
Suppose $M$ is a monoid in which every primary ideal is  
irreducible. Then every minimal primary decomposition of an ideal of $M$
is unique.
\end{proposition}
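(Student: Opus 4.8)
The plan is to prove that if $I=Q_{1}\cap\cdots\cap Q_{n}=Q_{1}'\cap\cdots\cap Q_{m}'$ are two minimal primary decompositions of an ideal $I$ of $M$, then $m=n$ and, after a suitable renumbering, $Q_{i}=Q_{i}'$ for every $i$. First I would invoke the first uniqueness theorem for primary decomposition, which holds in monoids by the usual argument (compute $(I\colon x)=\bigcap_{i}(Q_{i}\colon x)$ and note that $(Q_{i}\colon x)$ is $\sqrt{Q_{i}}$-primary whenever $x\notin Q_{i}$). It yields two facts: (i) the set $\{\sqrt{Q_{i}}\}$ coincides with the set of prime ideals of the form $\sqrt{(I\colon x)}$ with $x\in M$, and hence is an invariant of $I$; in particular $m=n$ and we may index so that $P_{i}:=\sqrt{Q_{i}}=\sqrt{Q_{i}'}$ for all $i$; and (ii) in any primary decomposition of $I$ the radicals of the components include all of $P_{1},\dots,P_{n}$.

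Now fix $k$; it is enough to show $Q_{k}=Q_{k}'$. Put $S:=\bigcap_{i\neq k}\bigl(Q_{i}\cap Q_{i}'\bigr)$, an ideal of $M$. Using the two decompositions, one sees immediately that
\[Q_{k}\cap S=\Bigl(\bigcap_{i}Q_{i}\Bigr)\cap\bigcap_{i\neq k}Q_{i}'=I\cap\bigcap_{i\neq k}Q_{i}'=I,\]
and, symmetrically, $Q_{k}'\cap S=I$. Since $Q_{k}$ and $Q_{k}'$ are both $P_{k}$-primary, so is $Q_{k}\cap Q_{k}'$, which is therefore irreducible by hypothesis; writing it as the intersection of the ideals $Q_{k}$ and $Q_{k}'$ forces $Q_{k}\subseteq Q_{k}'$ or $Q_{k}'\subseteq Q_{k}$, and by symmetry we may assume $Q_{k}\subseteq Q_{k}'$.

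The decisive step uses the hypothesis a second time. Suppose, for contradiction, that $Q_{k}\subsetneq Q_{k}'$. Unions of ideals of a monoid are ideals, so $Q_{k}\cup S\in\id$; and since $\id$ is a distributive lattice,
\[Q_{k}'\cap(Q_{k}\cup S)=(Q_{k}'\cap Q_{k})\cup(Q_{k}'\cap S)=Q_{k}\cup I=Q_{k}.\]
As $Q_{k}$ is irreducible, either $Q_{k}=Q_{k}'$, which contradicts the strict inclusion, or $Q_{k}=Q_{k}\cup S$, that is $S\subseteq Q_{k}$. In the latter case $I=Q_{k}\cap S=S=\bigcap_{i\neq k}\bigl(Q_{i}\cap Q_{i}'\bigr)$ is a primary decomposition of $I$ among whose radicals $P_{k}$ does not occur, contradicting (ii). Hence $Q_{k}=Q_{k}'$; as $k$ was arbitrary, the minimal primary decomposition of $I$ is unique.

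I expect the only real obstacle to be the embedded primes. The classical uniqueness results, notably the second uniqueness theorem obtained via localization, determine the primary components only at the minimal associated primes; the hypothesis that every primary ideal is irreducible must be used precisely to pin down the embedded components, and the device that achieves this is the distributive identity $Q_{k}'\cap(Q_{k}\cup S)=Q_{k}$ combined with the irreducibility of $Q_{k}$. Everything else is bookkeeping with the two decompositions and an appeal to the (standard) first uniqueness theorem.
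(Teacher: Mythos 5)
Your proof is correct, but it takes a genuinely different route from the paper's. The paper works directly with the components and never mentions radicals or associated primes: writing $I=\bigcap_{i=1}^{n}Q_i=\bigcap_{i=1}^{m}Q_i'$, it uses the fact that each $Q_1'$ is strongly irreducible (irreducible and strongly irreducible ideals coincide in a monoid) to extract some $Q_j\subseteq Q_1'$ from $\bigcap_i Q_i\subseteq Q_1'$, then some $Q_k'\subseteq Q_j\subseteq Q_1'$ from $\bigcap_i Q_i'\subseteq Q_j$; irredundancy of the primed decomposition forces $k=1$, hence $Q_j=Q_1'$, and iterating this matching yields $n=m$ and the equality of the two lists. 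Your argument instead front-loads the first uniqueness theorem to pair the components by their radicals, and then applies the irreducibility hypothesis twice more at each associated prime $P_k$: once to the $P_k$-primary ideal $Q_k\cap Q_k'$ to get comparability of $Q_k$ and $Q_k'$, and once to $Q_k$ itself via the identity $Q_k'\cap(Q_k\cup S)=Q_k$ to rule out a strict inclusion. Every step checks out: the colon-ideal computation behind the first uniqueness theorem does transfer to monoids, unions of ideals of a monoid are again ideals, and the distributivity you invoke is already set-theoretic. The cost is that your route is heavier---the first uniqueness theorem is nowhere stated in the paper and would have to be established there---whereas the paper's sandwiching argument gets both the matching and the count $n=m$ from strong irreducibility and minimality alone. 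What your version buys is a prime-by-prime analysis that makes explicit that the hypothesis is precisely what pins down the embedded components, which the classical localization-based uniqueness theorem cannot reach.
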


\begin{proof}
Suppose that $I\in \id$ with two minimal primary decomposition representations:
\[I=\bigcap_{i=1}^nP_i=\bigcap_{i=1}^m P_i'.\]
Without loss of generality, assume that $n\leqslant m$. Since $\bigcap_{i=1}^n P_i\subseteq P_1'$ and since $P_1'\in \ir,$ there exists $j\in \{1, \ldots, n\}$ such that $P_j\subseteq P_1'$. Similarly, the facts: $\bigcap_{i=1}^m P'_i\subseteq P_j$ and $P_j\in \ir$ implies  there exists $k\in \{1, \ldots, m\}$ such that 
\[P_k'\subseteq P_j\subseteq P_1'.\]
Since $\bigcap_{i=1}^m P'_i$ is a minimal primary decomposition, we must have $P_k'= P_1'$, and so, $k=1$. Hence, $P_1'=P_j$. Assume that $P_1'=P_1$. Proceeding with induction, we eventually arrive at the conclusions: $P_i'=P_i$ and $n=m$.
\end{proof}

\section*{Declaration } This research has not received any form of funding. Also, the following are not applicable with respect to this work: (a) disclosure of potential conflicts of interest, (b) research involving human participants and/or animals, (c) informed consent.


\end{document}